\documentclass[11pt,leqno, a4paper]{amsart}

\usepackage[margin=1.5in]{geometry}
\usepackage{graphicx}
\usepackage{amssymb}
\usepackage{color}
\usepackage{setspace}
\usepackage{mathtools}
\usepackage{dsfont}
\usepackage{commath}
\usepackage{enumerate}
\usepackage{mathrsfs}
\usepackage{fancyhdr}
\usepackage{amsthm}
\usepackage{scalerel} 
\usepackage{fancyhdr}
\usepackage{bbm}
\usepackage{hyperref}
\usepackage[normalem]{ulem}
\usepackage{tikz}
\usepackage{subcaption}
\usepackage{cleveref}
\crefdefaultlabelformat{#2{\scshape #1}#3}
\crefname{subfigure}{Figure}{figs.}
\Crefname{subfigure}{Figure}{Figs.}

\usetikzlibrary{decorations.fractals}

\DeclareGraphicsRule{.tif}{png}{.png}{`convert #1 `dirname #1`/`basename #1 .tif`.png}

\newtheorem{theorem}{Theorem}[section]

\theoremstyle{definition}
\newtheorem{definition}[theorem]{Definition}

\theoremstyle{remark}
\newtheorem{remark}[theorem]{Remark}

\numberwithin{equation}{section}

\newcommand{\rn}{\mathbb{R}^n}

\newcommand{\cH}{\mathcal{H}}

\newcommand{\St}{\mathbb{S}}
\newcommand{\R}{\mathbb{R}}

\newcommand{\spt}{{\rm{spt }}}

\newcommand{\dist}{\operatorname{dist}}

\newcommand\subsetsim{\mathrel{%
  \ooalign{\raise0.2ex\hbox{$\subset$}\cr\hidewidth\raise-0.8ex\hbox{\scalebox{0.9}{$\sim$}}\hidewidth\cr}}}

\newcommand{\defeq}{\vcentcolon=}

\def\XXint#1#2#3{{\setbox0=\hbox{$#1{#2#3}{\int}$ }
\vcenter{\hbox{$#2#3$ }}\kern-.6\wd0}}

\newcommand{\restr}{\mathbin{\vrule height 1.6ex depth 0pt width
0.13ex \vrule height 0.13ex depth 0pt width 1.3ex}}

\title[Anisotropic regularity in $\R^{2}$]{A geometric proof of regularity of all anisotropic minimal surfaces in $\R^{2}$}
\author{Max Goering}
\thanks{During the preparation of this note, the author was partially supported by FRG DMS-1853993}

\setcounter{tocdepth}{2}

\begin{document}
\maketitle

\begin{abstract}
A set of locally finite perimeter $E \subset \R^{n}$ is called an anisotropic minimal surface in an open set $A$ if $\Phi(E;A) \le \Phi(F;A)$ for some surface energy $\Phi(E;A) = \int_{\partial^{*}E \cap A} \| \nu_{E}\| d \cH^{n-1}$ and all sets of locally finite perimeter $F$ such that $E \Delta F \subset \subset A$.

In this short note we provide the details of a geometric proof verifying that all anisotropic surface minimizers in $\R^{2}$ whose corresponding integrand $\| \cdot \|$ is strictly convex are locally disjoint unions of line segments. This demonstrates that, in the plane, strict convexity of $\| \cdot \|$ is both necessary and sufficient for regularity. The corresponding Bernstein theorem is also proven: global anisotropic minimizers $E \subset \R^{2}$ are half-spaces.
\end{abstract}

\section{Introduction}
After De Giorgi's pioneering work on the regularity of area minimizing surfaces which arise as boundaries to sets of locally finite perimeter, much interest has arisen when replacing ``area'' with ``anisotropic energies'' of the form \eqref{e:sv1}.

It is well-known that strict convexity of the integrand $\| \cdot \|$ is necessary for there to be a robust regularity theory, see for instance \cite[Remark 20.4]{maggi2012sets}. It is also known that creating competitors by intersecting with half-spaces can only reduce the energy, see for instance \cite[Remark 20.3]{maggi2012sets}.  Focusing our attention on $1$-dimensional boundaries in $\R^{2}$ we show that strict convexity is not only necessary, but also sufficient for a robust regularity result, Theorem \ref{t:1}. The heart of the proof boils down to a localized version of the fact that intersections with half-spaces reduce energy. 

The technique used to prove Theorem \ref{t:1} fails in higher-dimensions because of the potential existence of saddle points. At a saddle point, one cannot create a competitor by this localization argument. This observation could be thought of as a qualitative version of, or just motivation to defend, the statement that (anisotropic) minimal surfaces have (anisotropic) mean curvature zero.

\section{Preliminaries}

The notation used, and presentation of this section is heavily influenced by \cite{maggi2012sets}.

Suppose $\| \cdot \| : \St^{1} \to (0, \infty)$ is a measurable function. We say such a function $\| \cdot \|$ is strictly convex if its $1$-homogeneous extension to $\R^{2} \setminus \{0\}$ is strictly convex. 

Corresponding to a given convex function $\| \cdot \|$ and some open set $A_{0} \subset \R^{2}$ with finite perimeter, we consider the functional
\begin{equation} \label{e:sv1}
\Phi(E;A_{0}) \defeq \int_{\partial^{*}E \cap A_{0}} \| \nu_{E}\| d \cH^{1}.
\end{equation} 
\begin{definition} \label{d:minimizes}
For a set of locally finite perimeter $A_{0}$ and a mapping $\| \cdot \| : \St^{1} \to (0, \infty)$, we say that a set of locally finite perimeter $E$ minimizes $\Phi( \cdot \, ; A_{0})$ if $\partial E = \spt \mu_{E}$ and for all sets of locally finite perimeter $F$ such that $\overline{E \Delta F} \subset \subset A_{0}$ it holds that
\begin{equation*}
\Phi(E;U) \le \Phi(F; U),
\end{equation*}
where $U \supset \overline{E \Delta F}$ is a pre-compact, open subset of $A_{0}$. 
\end{definition}

The purpose of the set $A_{0}$ in \Cref{d:minimizes} is to define boundary condititions. See \cref{f:1}.

\begin{remark}
The requirement that $\partial E = \spt \mu_{E}$ is necessary in order to be able to make topological claims about the boundary of an anisotropic minimizer. Fortunately, given any set of locally finite perimeter $E$, there exists some borel set $E^{\prime}$ so that $\spt \mu_{E^{\prime}} = \partial E^{\prime}$. See, for instance, \cite[Remark 16.11]{maggi2012sets}. Therefore, this requirement boils down to choosing the ``correct representative'' of $E$ among all equivalent sets of locally finite perimeter.
\end{remark}

\begin{figure*}
	\centering
	\begin{subfigure}[b]{0.48\textwidth}
		\includegraphics[width = \textwidth]{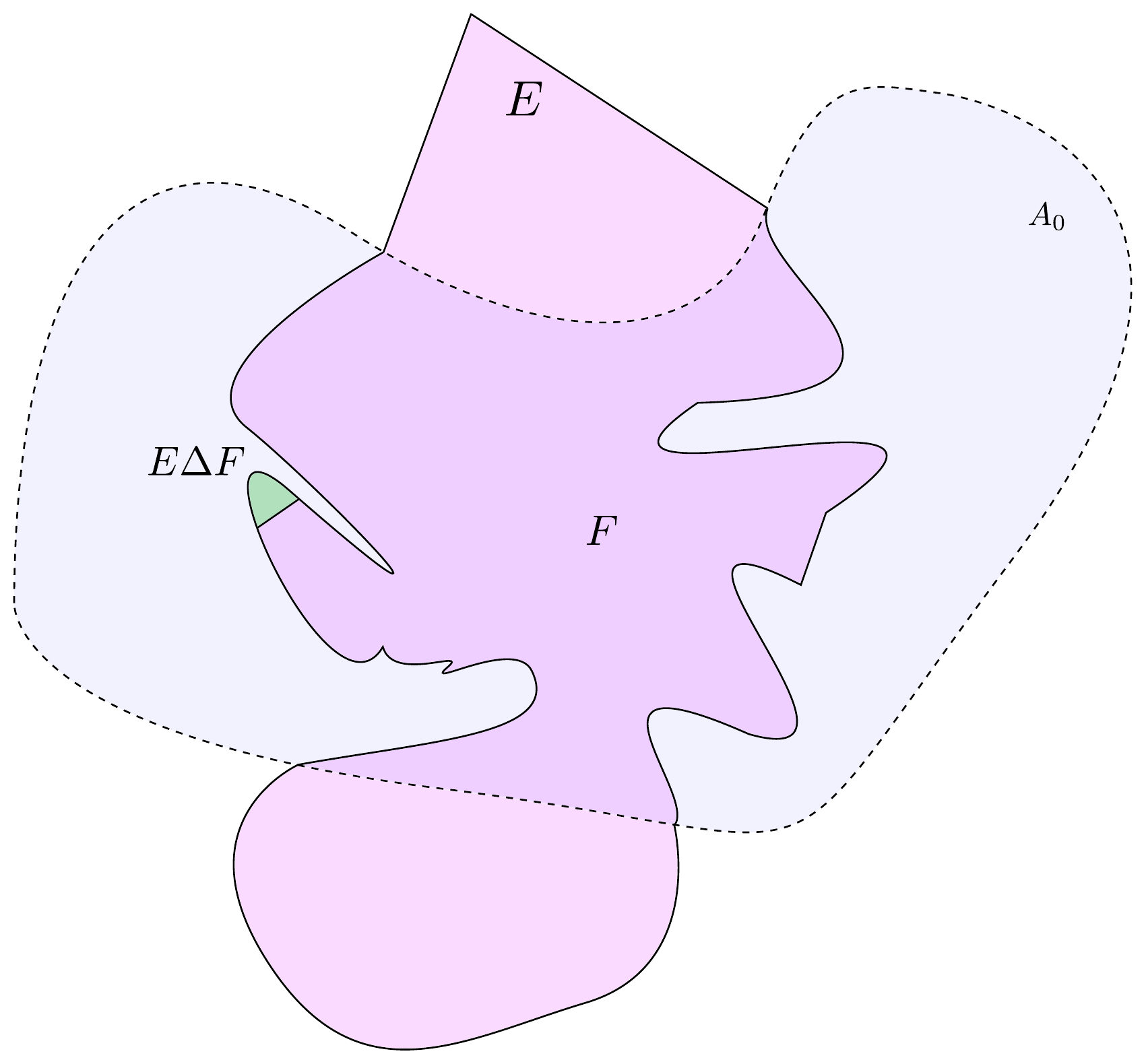}
		\caption{A valid competitor $F$ relative to the set $A_{0}$.}
		\label{f:1}
	\end{subfigure}
	\quad
	\begin{subfigure}[b]{0.48\textwidth}
	\includegraphics[width = \textwidth]{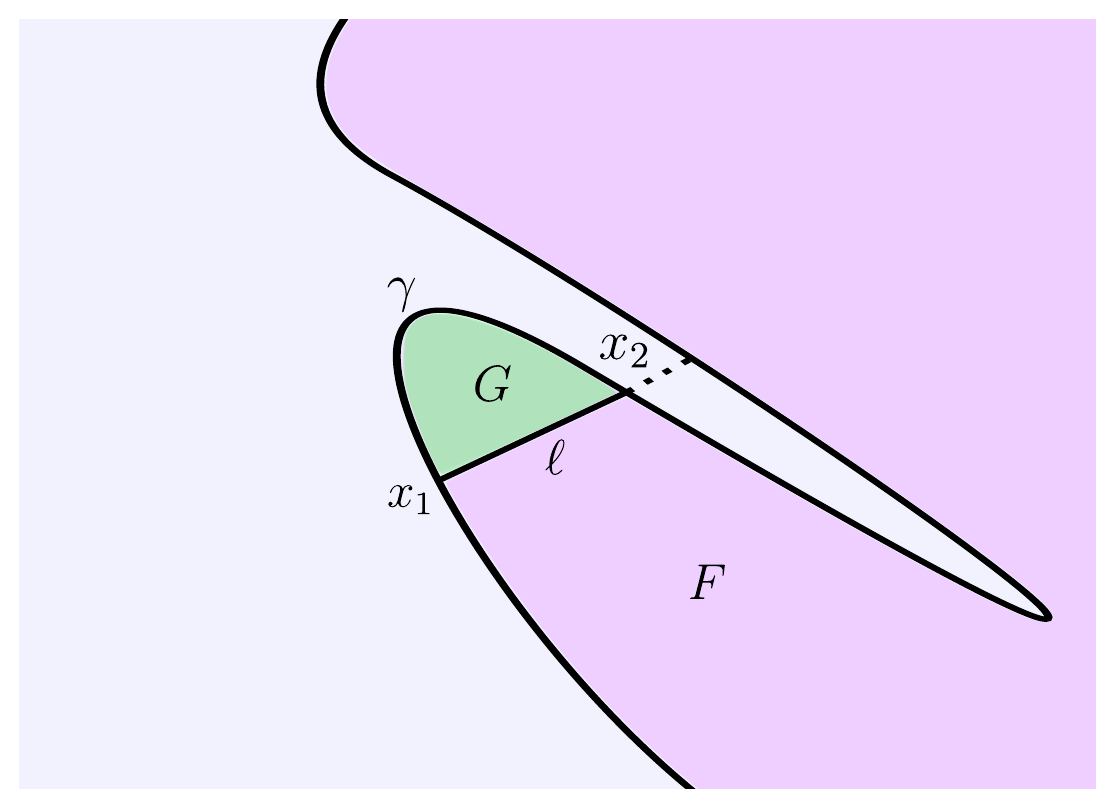}
	\caption{\label{f:2} If $\gamma \cap \ell \neq \{x_{1}, x_{2}\}$ shorten $\ell$ by removing the dashed line segment and redefining $\gamma$ accordingly.}
	\end{subfigure}
\end{figure*}

\begin{remark} \label{r:nocrossings}
If $E \subset \R^{2}$ is $\Phi( \cdot \, ; A_{0})$ minimizing, then $\partial E \cap A_{0}$ contains no self-crossings, or else one could reduce the energy $\Phi$ by removing the loop formed by $\partial E$ crossing itself. 
\end{remark}

We follow the convention that if $A, B \subset \R^{2  }$ then $A \approx B$ means $\cH^{1}(A \Delta B) = 0$, and $A \subsetsim B$ means $\cH^{1} \left( A \setminus B \right) = 0$. Moreover, when considering a set of locally finite perimeter $A$ we will always work with a representation of $A$ so that $\partial A = \spt \mu_{A}$.

For a set of locally finite perimeter $A$, let $\mu_{A}$ denote the Gauss-Green measure associated to $A$, $\nu_{A}$ denote the outward pointing measure theoretic normal, and $\partial^{*}A$ denote the reduced boundary of $A$.

Given a set $A \subset \R^{2}$ and a number $s \in [0,1]$ define
$$
A^{(s)} = \left\{ x \in \R^{2} : \lim_{r \downarrow 0} \frac{ \cH^{2} \left( A \cap B(x,r) \right)}{ \cH^{2}(B(x,r))} = s \right\}.
$$
For a set of locally finite perimeter $A \subset \R^{2}$ the essential boundary of $A$, denoted $\partial^{e}A$ is defined to be the set $\R^{2} \setminus \left( E^{(0)} \cup E^{(1)} \right)$.

We now recall a technical lemma due to Federer.

\begin{theorem}[Federer's theorem] \label{t:federer} If $E$ is a set of locally finite perimeter in $\rn$, then $\partial^{*}E \subset E^{(1/2)} \subset \partial^{e}E$, and
$$
\cH^{n-1}(\partial^{e}E \setminus \partial^{*}E) = 0.
$$
In particular, for any Borel set $M \subset \rn$,
$$
M \approx \left( M \cap E^{(1)} \right) \cup \left( M \cap E^{(0)} \right) \cup \left( M \cap \partial^{*}E \right).
$$
\end{theorem}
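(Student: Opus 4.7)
The statement contains three assertions of widely differing depth: two set-theoretic inclusions, a $\cH^{n-1}$-null-set estimate, and a partition statement for arbitrary Borel $M$. My plan is to dispatch the two inclusions first, spend the bulk of the effort on the null-set claim, and deduce the partition statement as a near-formal consequence. For the inclusion $\partial^{*}E \subset E^{(1/2)}$, I would invoke De Giorgi's blow-up theorem: at any $x \in \partial^{*}E$, the rescalings $(E-x)/r$ converge in $L^{1}_{\loc}(\rn)$ to the half-space $\{y \cdot \nu_{E}(x) < 0\}$ as $r \downarrow 0$, which on testing against $B(0,1)$ forces $\cH^{n}(E \cap B(x,r))/\cH^{n}(B(x,r)) \to 1/2$, i.e.\ $x \in E^{(1/2)}$. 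The inclusion $E^{(1/2)} \subset \partial^{e}E$ is immediate from the definition $\partial^{e}E = \rn \setminus (E^{(0)} \cup E^{(1)})$.

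The substantive part is the null-set estimate $\cH^{n-1}(\partial^{e}E \setminus \partial^{*}E) = 0$. My approach would combine two ingredients. First, De Giorgi's structure theorem, which identifies the total variation of the Gauss--Green measure as $|\mu_{E}| = \cH^{n-1} \restr \partial^{*}E$; in particular $|\mu_{E}|(\rn \setminus \partial^{*}E) = 0$. Second, the lower density estimate
\begin{equation*}
\liminf_{r \downarrow 0} \frac{|\mu_{E}|(B(x,r))}{r^{n-1}} \geq c > 0 \qquad \text{for every } x \in \partial^{e}E,
\end{equation*}
which follows from the relative isoperimetric inequality on small balls, together with the fact that $x \in \partial^{e}E$ forces both $\cH^{n}(E \cap B(x,r))$ and $\cH^{n}(B(x,r) \setminus E)$ to be comparable to $r^{n}$ along a subsequence. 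These two ingredients are combined via a standard Vitali--Besicovitch covering comparison: whenever a Radon measure $\mu$ satisfies a $(n-1)$-dimensional lower density bound at every point of a set $A$, one has $\cH^{n-1}(A) \leq C \mu(A)$ for a dimensional constant $C$. Applied to $A := \partial^{e}E \setminus \partial^{*}E$ and $\mu := |\mu_{E}|$, this yields $\cH^{n-1}(\partial^{e}E \setminus \partial^{*}E) \leq C |\mu_{E}|(\rn \setminus \partial^{*}E) = 0$.

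The final partition assertion is then essentially formal: by the very definition of $\partial^{e}E$, the three sets $E^{(0)}$, $E^{(1)}$, $\partial^{e}E$ partition $\rn$, so any Borel $M$ equals $(M \cap E^{(0)}) \cup (M \cap E^{(1)}) \cup (M \cap \partial^{e}E)$, and the previous step allows $\partial^{e}E$ to be replaced by $\partial^{*}E$ modulo an $\cH^{n-1}$-null set. The main obstacle in this program is the lower density bound for $|\mu_{E}|$ at every essential boundary point: proving it requires both the measure-theoretic characterization of points of $\partial^{e}E$ and a quantitative form of the relative isoperimetric inequality, and packaging it into the Vitali--Besicovitch comparison is where essentially all the geometric measure theory of the argument is concentrated.
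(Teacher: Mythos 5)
The paper offers no proof of this statement: it is recalled verbatim as a classical result (Federer's theorem; cf.\ \cite[Theorem 16.2]{maggi2012sets}), so there is no in-paper argument to compare against. Your outline reproduces the standard textbook proof --- De Giorgi's blow-up theorem for $\partial^{*}E \subset E^{(1/2)}$, the definition of $\partial^{e}E$ for the second inclusion, and the combination of $|\mu_{E}| = \cH^{n-1}\restr \partial^{*}E$ with a density-comparison covering lemma for the null-set claim --- and the structure is sound. One technical correction: at a point $x \in \partial^{e}E$ you can only guarantee $\limsup_{r \downarrow 0} |\mu_{E}|(B(x,r))/r^{n-1} \ge c > 0$, not the $\liminf$ bound you assert. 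Membership in $\partial^{e}E$ only excludes the density values $0$ and $1$ as limits; the volume fraction $\cH^{n}(E \cap B(x,r))/\cH^{n}(B(x,r))$ may still oscillate, and at scales where $\min\{\cH^{n}(E \cap B(x,r)), \cH^{n}(B(x,r)\setminus E)\}$ is $o(r^{n})$ the relative isoperimetric inequality yields nothing of order $r^{n-1}$. The correct extraction is a sequence $r_{j} \downarrow 0$ along which the volume fraction is pinned away from $0$ and $1$ (using continuity in $r$ and the intermediate value theorem), which gives the $\limsup$ bound; since the Vitali--Besicovitch comparison $\cH^{n-1}(A) \le C\,\mu(A)$ only requires the upper-density ($\limsup$) hypothesis, the remainder of your argument goes through unchanged.
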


We also recall the effect that some set operations have on Gauss-Green measures and reduced boundaries
\begin{theorem}[Set operations on Gauss-Green measures] \label{t:soggm}
If $E$ and $F$ are sets of locally finite perimeter, then

\begin{equation}\label{e:soggm}
\mu_{E \setminus F}  = \mu_{E} \restr F^{(0)} - \mu_{F} \restr E^{(1)} + \nu_{E} \cH^{n-1} \restr \{ \nu_{E} = - \nu_{F}\}.
\end{equation}
and
\begin{equation} \label{e:sord}
\partial^{*} \left( E \cup F \right) \approx \left( F^{(0)} \cap \partial^{*} E \right) \cup \left( E^{(0)} \cap \partial^{*}F \right) \cup \left\{ \nu_{E} = \nu_{F} \right\} 
\end{equation}

\end{theorem}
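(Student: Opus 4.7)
The plan is to prove both identities via a nine-cell case analysis based on Federer's theorem applied simultaneously to $E$ and $F$. Since De Giorgi's structure theorem gives $\mu_{G} = \nu_{G} \cH^{n-1} \restr \partial^{*}G$ for any set of locally finite perimeter $G$, it suffices to determine which points of $\rn$ belong to $\partial^{*}(E \setminus F)$ and to identify the outward normal there. We partition $\rn$ up to an $\cH^{n-1}$-null set as the disjoint union $E^{(0)} \cup E^{(1)} \cup \partial^{*}E$ and likewise for $F$, and inspect each of the nine intersected cells.

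Six of the cells are immediate: on $E^{(0)} \cap F^{(j)}$ for any $j$, on $E^{(1)} \cap F^{(1)}$, and on $\partial^{*}E \cap F^{(1)}$, the density of $E \setminus F$ is either $0$ or $1$ and so contributes nothing to the reduced boundary. On $\partial^{*}E \cap F^{(0)}$ the blow-up of $F$ is empty, so $E \setminus F$ blows up to the same half-space as $E$, producing density $\tfrac{1}{2}$ with outward normal $\nu_{E}$ and the summand $\mu_{E} \restr F^{(0)}$. Dually, on $E^{(1)} \cap \partial^{*}F$ the blow-up of $E$ is all of $\rn$, so $E \setminus F$ blows up to $F^{c}$, producing density $\tfrac{1}{2}$ with normal $-\nu_{F}$ and the summand $-\mu_{F} \restr E^{(1)}$.

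The remaining cell $\partial^{*}E \cap \partial^{*}F$ requires the following observation: if $\nu_{E}(x) \neq \pm \nu_{F}(x)$, then the blow-up of $E \cap F$ at $x$ is a dihedral region with density $\tfrac{\pi - \theta}{2\pi} \in (0, \tfrac{1}{2})$ where $\theta$ is the angle between the two normals, placing $x$ in $\partial^{e}(E \cap F) \setminus \partial^{*}(E \cap F)$; Federer's theorem then forces such points to form an $\cH^{n-1}$-null set, so $\nu_{E} = \pm \nu_{F}$ holds $\cH^{n-1}$-a.e.\ on $\partial^{*}E \cap \partial^{*}F$. On $\{\nu_{E} = \nu_{F}\}$ the blow-ups of $E$ and $F$ agree, so $E \setminus F$ has density $0$ and nothing is contributed. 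On $\{\nu_{E} = -\nu_{F}\}$, $F^{c}$ has the same blow-up as $E$, so $E \setminus F = E \cap F^{c}$ has density $\tfrac{1}{2}$ with normal $\nu_{E}$, producing the final summand $\nu_{E} \cH^{n-1} \restr \{\nu_{E} = -\nu_{F}\}$. Summing the three nontrivial contributions gives \eqref{e:soggm}.

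For \eqref{e:sord}, one can either repeat the case analysis tracking where $E \cup F$ has density $\tfrac{1}{2}$ (namely on $F^{(0)} \cap \partial^{*}E$, on $E^{(0)} \cap \partial^{*}F$, and on the subset of $\partial^{*}E \cap \partial^{*}F$ with $\nu_{E} = \nu_{F}$, while on the opposite-normal subset the union of complementary half-spaces gives density $1$); or one can deduce \eqref{e:sord} directly from \eqref{e:soggm} applied to $E^{c} \setminus F$, using $E \cup F = \rn \setminus (E^{c} \cap F^{c})$ together with $\mu_{G^{c}} = -\mu_{G}$ and $\partial^{*}G^{c} = \partial^{*}G$. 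The main obstacle throughout is the dichotomy $\nu_{E} = \pm \nu_{F}$ on $\partial^{*}E \cap \partial^{*}F$, which requires the blow-up argument above rather than mere bookkeeping across the Federer cells.
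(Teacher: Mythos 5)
The paper does not actually prove this statement: it is recalled from the literature (essentially \cite[Theorem 16.3]{maggi2012sets} together with Federer's theorem), so there is no in-paper argument to compare yours against. Judged on its own merits, your proof is correct and is the standard textbook argument. The nine-cell Federer decomposition, the density computations via blow-ups (using that $L^{1}_{\loc}$ convergence of blow-ups passes to intersections and complements), and the reduction of everything to the dichotomy $\nu_{E} = \pm \nu_{F}$ holding $\cH^{n-1}$-a.e.\ on $\partial^{*}E \cap \partial^{*}F$ form exactly the right skeleton, and your wedge-density argument (density $\tfrac{\pi-\theta}{2\pi} \notin \{0,\tfrac{1}{2},1\}$ forces such points into $\partial^{e}(E \cap F) \setminus \partial^{*}(E \cap F)$, which Federer makes $\cH^{n-1}$-null) is the standard proof of that dichotomy. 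Two small points a complete write-up should make explicit: (i) knowing that $E \setminus F$ has density $\tfrac{1}{2}$ on a cell only places those points in $\partial^{e}(E \setminus F)$; you must invoke Federer once more to upgrade to $\partial^{*}(E \setminus F)$ up to a null set, and then observe that the De Giorgi blow-up of $E \setminus F$ at such a point must coincide with the half-space limit you computed cell-by-cell, which is what pins down $\nu_{E \setminus F}$ (two half-spaces with the same $L^{1}_{\loc}$ limit have the same normal); (ii) the complementation identities $\partial^{*}(G^{c}) = \partial^{*}G$, $\nu_{G^{c}} = -\nu_{G}$, and $(G^{c})^{(0)} = G^{(1)}$ used to pass from \eqref{e:soggm} to \eqref{e:sord} deserve a line of justification, though they are immediate from the definitions. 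Neither point is a gap in the idea, only in the bookkeeping.
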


\section{The Regularity Theorem}
Our main goal is to prove the following theorem.

\begin{theorem} \label{t:1}
Suppose $\| \cdot \| : \St^{1} \to (0, \infty)$ is a lower semicontinuous, bounded, strictly convex function and $A_{0} \subset \R^{2}$ is an open set of locally finite perimeter. Then there exists a $\Phi( \cdot \, ; A_{0})$ minimizer which we denote by $E$. 

Moreover, if $E$ minimizes $\Phi(\cdot  \, ; A_{0})$  then there exists a set equivalent to our minimize, which we also call $E$,so that whenever $\partial E \cap A_{0} \neq \emptyset$ it follows $\partial E \cap A_{0}$ is a non-intersecting collection of line segments. In the case that $A_{0} = \R^{2}$, $E$ must be a half-space.
\end{theorem}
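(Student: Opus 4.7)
The plan is three stages: existence by the direct method, local regularity via a chord-replacement competitor argument leveraging strict convexity of $\|\cdot\|$, and the Bernstein statement by ruling out multi-line configurations. Existence follows from the direct method: since $\|\cdot\|$ is bounded away from $0$ and $\infty$ on $\St^{1}$, $\Phi(\cdot;A_{0})$ is comparable to the ordinary perimeter, and the lower semicontinuity of anisotropic energies with lower semicontinuous integrand combined with $L^{1}_{\loc}$-compactness of BV functions produces an $E$ minimizing $\Phi(\cdot;A_{0})$ in the class of competitors from Definition~\ref{d:minimizes}.

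For local regularity, Remark~\ref{r:nocrossings} shows that $\partial E \cap A_{0}$ is locally a disjoint union of simple arcs. Fix such an arc $\gamma$ and suppose, for contradiction, that $\gamma$ is not a line segment. Then I pick $x_{1}, x_{2} \in \gamma$ close enough that the sub-arc $\gamma_{0} \subset \gamma$ between them and the chord $\ell = [x_{1}, x_{2}]$ bound a Jordan region $R$ with $\overline{R} \subset A_{0}$; shortening $\ell$ as in Figure~\ref{f:2} guarantees $\ell \cap \partial E = \{x_{1}, x_{2}\}$. Define the competitor $F = E \Delta R$ and let $U \supset \overline{R}$ be a pre-compact open subset of $A_{0}$. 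A careful application of Theorem~\ref{t:soggm} shows that inside $U$, $\partial^{*}F$ agrees with $\partial^{*}E$ off $\overline{R}$ while inside $R$ the arc $\gamma_{0}$ is replaced by $\ell$, so
\begin{equation*}
\Phi(F;U) - \Phi(E;U) = \int_{\ell} \|\nu_{F}\| \, d\cH^{1} - \int_{\gamma_{0}} \|\nu_{E}\| \, d\cH^{1}.
\end{equation*}
Parametrizing $\gamma_{0}$ by arc length on $[0,L]$ with unit tangent $\tau(s)$, so that $\nu_{E}(s)$ is a $90^{\circ}$ rotation of $\tau(s)$ and $\int_{0}^{L}\tau(s)\,ds = x_{2} - x_{1}$, Jensen's inequality applied to the $1$-homogeneous convex extension of $\|\cdot\|$ yields
\begin{equation*}
\int_{\gamma_{0}}\|\nu_{E}\|\,d\cH^{1} \geq \Big\| \int_{0}^{L}\nu_{E}(s)\,ds \Big\| = \int_{\ell}\|\nu_{F}\|\,d\cH^{1}.
\end{equation*}
Strict convexity of $\|\cdot\|$ sharpens this to a strict inequality unless $\tau(s)$ is constant, i.e., $\gamma_{0}$ is already a line segment, contrary to assumption. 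Hence $\Phi(F;U) < \Phi(E;U)$, contradicting minimality.

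For the Bernstein statement ($A_{0} = \R^{2}$), the local step renders $\partial E$ a locally finite disjoint union of line segments. The requirement $\partial E = \spt \mu_{E}$ combined with the divergence-free structure of Gauss-Green measures rules out finite endpoints: near a would-be tip $p$, $\R^{2} \setminus \partial E$ is locally connected, so $\mathbf{1}_{E}$ would have to be constant near $p$, contradicting that $|\mu_{E}|$ is concentrated on a segment through $p$. Hence each component of $\partial E$ is a full line, and any two such lines must be parallel (otherwise they would intersect, violating disjointness). If two or more parallel lines occurred, the complement would contain an infinite strip; truncating by $F = E \setminus B_{R}$ for a large ball $B_{R}$ meeting both lines replaces chords of total $\Phi$-length $\Theta(R)$ with two transverse arcs of $\Phi$-length $O(1)$, contradicting minimality for large $R$. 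Therefore $\partial E$ is empty or a single line, i.e., $E$ is a half-space.

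The main obstacle is the careful verification that $F = E \Delta R$ behaves as claimed: one must confirm via Theorem~\ref{t:soggm} (together with Federer's Theorem~\ref{t:federer}) that $\partial^{*}F \cap R \approx \ell$ without spurious measure-theoretic pieces and that the sign of $\nu_{F}$ on $\ell$ aligns correctly with the orientation of $\nu_{E}$ on $\gamma_{0}$, so that the two integrals in the Jensen step match up. This is exactly what the shortening in Figure~\ref{f:2} ensures. Once this bookkeeping is in place, the entire proof reduces to the sharp Jensen-type inequality above, which is a one-line consequence of strict convexity of $\|\cdot\|$.
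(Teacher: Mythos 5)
Your existence argument and your local regularity argument are essentially the paper's own proof: the paper likewise takes a non-flat arc $\gamma\subset\partial E$ with chord $\ell$ shortened so that $\ell\cap\partial E=\{x_1,x_2\}$, forms the Jordan region $G$ bounded by $\gamma\cup\ell$, replaces $E$ by $E\setminus G$ or $E\cup G$ (your $E\Delta R$ unifies the two cases), carries out exactly the Gauss--Green bookkeeping you flag as the main obstacle (equations \eqref{e:1d10}--\eqref{e:1d131}), obtains $\int_\gamma\nu_E\,d\cH^1=\int_\ell\nu_F\,d\cH^1$ from $\mu_G(A_0)=0$ (your tangent-integral identity is the same fact), and concludes with the strict Jensen inequality \eqref{e:1d15}. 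One small misattribution: the fact that $\partial E\cap A_0$ is locally a union of simple Lipschitz arcs does not come from Remark \ref{r:nocrossings} alone; it requires the Ahlfors regularity and Wa\.zewski argument of Remark \ref{r:lipschitz}, which is also what guarantees that ``the next crossing'' of $\ell$ with $\partial E$ exists when you shorten the chord.

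The one genuine flaw is in your Bernstein step. The competitor $F=E\setminus B_R$ modifies $E$ on \emph{all} of $B_R$, not just between the two chosen lines, and the added boundary is $\partial B_R\cap E^{(1)}$, which need not consist of ``two transverse arcs of length $O(1)$.'' For instance, if $\partial E=L_1\cup L_2$ and $E$ is the \emph{complement} of the strip, then $\partial B_R\cap E^{(1)}$ has length $\Theta(R)$ and your competitor is worse, not better; and if $\partial E$ contains three or more parallel lines, then both $E$ and $E^c$ contain a half-plane, so neither $E\setminus B_R$ nor $E\cup B_R$ works. The repair is to confine the modification to the region between two \emph{consecutive} lines: replace $B_R$ by $B_R\cap\{\text{strip between }L_1\text{ and }L_2\}$, or, as the paper does, by a long thin rectangle $R_{\rho,1}$ whose long sides lie on $L_1\cup L_2$, slightly fattened so that the surgery does not meet any other component of $\partial E$. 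With that localization your comparison of $\Theta(R)$ of removed chord against $O(1)$ of added transversal is exactly the paper's inequality $4\rho b>4a$, and the rest of your outline (no endpoints, hence full parallel lines) is sound.
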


\begin{remark}
The existence portion of Theorem \ref{t:1} is well-known. See, for instance \cite[Remark 20.5]{maggi2012sets} and the historical notes and citations therein. 
\end{remark}

We reiterate that the geometric idea behind the of proof of Theorem \ref{t:1} is known and can even be seen in Federer's definition of an elliptic integrand. The technicalities that arise are primarily due to showing that a point where the boundary is not flat ensures a localized version of the half-plane argument from, for instance \cite[Remark 20.3]{maggi2012sets}, creates a valid competitor.

We first make use of the semicontinuity and boundedness of $\| \cdot \|$ to make a substantial simplification.

\begin{remark}[$\partial E$ is locally Lipschitz for anisotropic minimal surfaces] \label{r:lipschitz}
Let $\| \cdot \|$ be as in the statement of Theorem \ref{t:1}. Since $\| \cdot \|$ is a positive lower semicontinuous function on $\St^{1}$, it achieves a minimum. Since it is also bounded this means there exist $c, C > 0$ such that $c | \nu | \le \| \nu \| \le C |\nu|$ for all $\nu \in \R^{2} \setminus \{0\}$. By a standard competitor argument which requires building competitors by removing balls and the differential inequality afforded by the isoperimetric inequality,\footnote{For more details see the proof of, for instance, \cite[Theorem 21.11]{maggi2012sets}} this implies that if $E$ minimizes $\Phi( \cdot, A_{0})$ and $x \in \partial^{*}E \cap A_{0}$ then there exists $C_{A}=C_{A}(c,C)$ independent of $x$ such that for all $r \in (0, \dist(x, \partial A_{0}))$,
$$
C_{A}^{-1} \le \frac{ \cH^{1}( \partial^{*}E \cap B(x,r))}{r} \le C_{A}.
$$ 
That is, $|\mu_{E}|$ is Ahlfors regular at small, but locally uniform, scales for points $x \in \partial^{*}E$. This has two immediate consequences: the lower bound ensures that there are no isolated points in $\partial E$. The upper-bound guarantees that $\spt \mu_{E} = \overline{ \partial^{*} E}$. It follows from our representation of $E$ that 
\begin{equation} \label{e:minbound}
\cH^{1}( (\partial E \setminus \partial^{*}E )\cap A_{0}) = 0.
\end{equation}

In particular, if $K$ is a compact subset of $A_{0}$, Wa\.zewski's theorem ensures that each connected component of $\partial E \cap K$ is a Lipschitz curve since $\cH^{1}(K \cap \partial E) < \infty$ and $K \cap \partial E$ is compact. In particular, connected components of $\partial E \cap A_{0}$ are locally Lipschitz curves.
\end{remark}

\begin{theorem} \label{t:2}
If $\| \cdot \| : \St^{1} \to (0, \infty)$ is a lower semicontinuous, bounded, strictly convex function, $A_{0} \subset \R^{2}$ is an open set, and $E \subset \R^{2}$ minimizes $\Phi( \cdot \, ; A_{0})$ then there exists an equivalent set of locally finite perimeter which we also call $E$, so that $\partial E \cap A_{0} \neq \emptyset$ implies $\partial E \cap A_{0}$ is a collection of non-intersecting line segments. In the case that $A_{0} = \R^{2}$, $E$ must be a half-space.
\end{theorem}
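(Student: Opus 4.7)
The plan is to proceed in three stages: (i) show that each connected component of $\partial E \cap A_0$ must be a straight line segment by a Jensen-type energy reduction exploiting strict convexity; (ii) observe non-intersection directly from Remark \ref{r:nocrossings}; and (iii) for $A_0 = \R^2$, upgrade these segments to complete lines via a density/Federer argument and then rule out configurations with two or more parallel lines via a long-rectangle competitor. The main technical obstacle will arise in stage (i), where ensuring that the straight segment used to replace a non-flat arc is disjoint from the rest of $\partial E$ apart from its endpoints requires the localization trick indicated in Fig.~\ref{f:2}.

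For stage (i), fix a connected component $\gamma$ of $\partial E \cap A_0$; by Remark \ref{r:lipschitz} it is a locally Lipschitz curve. Assume for contradiction that $\gamma$ is not everywhere straight, and pick $x_1, x_2 \in \gamma$ close enough that the segment $\ell$ joining them lies in $A_0$ and the sub-arc $\gamma_0 \subset \gamma$ from $x_1$ to $x_2$ is not straight. If $\ell$ meets $\partial E$ outside $\{x_1, x_2\}$, replace $x_1, x_2$ by the outermost such intersection points along $\ell$, as in Fig.~\ref{f:2}, shrinking further if necessary to preserve non-flatness of $\gamma_0$. Then $\gamma_0 \cup \ell$ is a Jordan curve enclosing a bounded region $R \subset \subset A_0$, and the candidate competitor is $F \defeq E \Delta R$; \Cref{t:soggm} identifies $\partial^* F$ in a neighborhood $U$ of $\overline R$ with $(\partial^* E \setminus \gamma_0) \cup \ell$. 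The key quantitative input is Jensen's inequality for the positively $1$-homogeneous strictly convex function $\phi(v) \defeq \|v^\perp\|$ (where $v^\perp$ denotes a fixed $90^\circ$-rotation of $v$): parametrizing $\gamma_0 \colon [0,L] \to \R^2$ by arc length,
\begin{equation*}
\int_{\gamma_0} \|\nu_E\| \, d\cH^1 = \int_0^L \phi(\gamma_0'(s))\,ds \geq L\,\phi\!\left(\tfrac{x_2-x_1}{L}\right) = \phi(x_2-x_1) = \int_\ell \|\nu_\ell\| \, d\cH^1,
\end{equation*}
with strict inequality because $\gamma_0 \neq \ell$ forces $\gamma_0'$ to not be $\cH^1$-a.e. equal to $(x_2-x_1)/L$. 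This yields $\Phi(F; U) < \Phi(E; U)$, contradicting minimality.

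Stage (ii) is immediate from Remark \ref{r:nocrossings}. For stage (iii), assume $A_0 = \R^2$ and $\partial E \neq \emptyset$. If some segment ended at an interior point $x \in \R^2$, then near $x$ the reduced boundary would be a half-segment, whose complement in a small ball $B(x,r)$ is connected; thus $E \cap B(x,r)$ would agree up to $\cH^2$-null sets with either $B(x,r)$ or $\emptyset$, contradicting $x \in E^{(1/2)}$ guaranteed by \Cref{t:federer}. Hence every segment is a full line, and pairwise non-intersection forces these lines to be mutually parallel. If two or more such lines existed, pick two consecutive ones enclosing a strip region $S$ of $E$ (or of $E^c$), and define a competitor $F$ inside a long thin rectangle $R$ transverse to these lines by ``capping off'' $S$ with two short walls placed near the short sides of $R$; then $\overline{E \Delta F} \subset R$, the anisotropic perimeter added inside $R$ is bounded by a constant independent of the length of $R$, while the anisotropic perimeter of $\partial E \cap R$ removed is linear in that length, so $\Phi(F; R) < \Phi(E; R)$ for $R$ long enough. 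This contradiction leaves exactly one boundary line, so $E$ is a half-space.
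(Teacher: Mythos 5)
Your proposal is correct and follows essentially the same strategy as the paper: localize at a non-flat arc of $\partial E$, close it up with its chord $\ell$ into a Jordan curve bounding a region compactly contained in $A_{0}$, flip that region to build the competitor, and use strict convexity (Jensen) to see that the chord is strictly $\Phi$-cheaper than the arc, with the Bernstein step handled by the same long-rectangle competitor. The differences are minor: you obtain the key identity $\int_{\gamma_{0}} \nu_{E} \, d\cH^{1} = \int_{\ell} \nu_{\ell} \, d\cH^{1}$ from the arc-length parametrization and the rotated integrand $\phi(v) = \| v^{\perp}\|$, where the paper instead tests $\mu_{G}$ against a cutoff function; you unify the two cases $G \subset E$ and $G \subset E^{c}$ by taking $F = E \Delta R$; and you supply an explicit argument (which the paper omits) that when $A_{0} = \R^{2}$ the segments must be full lines, though the contradiction there is more cleanly phrased as $x \notin \spt \mu_{E}$ rather than via $E^{(1/2)}$, since an endpoint of a segment need not lie in $\partial^{*}E$.
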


\begin{proof}
Without loss of generality, assume $E = E^{(1)}$. Suppose for the sake of contradiction that $\partial E \cap A_{0} \neq \emptyset$ is not made up of exclusively straight, non-intersecting line segments.

Then, there exists a non-flat curve $\gamma \subset \partial E$ such that the endpoints of $\gamma$, denoted by $\{x_{1}, x_{2}\}$, satisfy 
\begin{equation} \label{e:1d1}
|x_{1} - x_{2}| < \dist(\gamma, \partial A_{0}).
\end{equation}
By Remark \ref{r:nocrossings}, $\gamma$ has no self-crossings nor does it cross $\partial E \setminus \gamma$.

Let $\ell$ be the line segment between $x_{1}$ and $x_{2}$. If $x \in \ell$ then in light of \eqref{e:1d1}
$$
\dist(x, \gamma) \le \frac{1}{2} \dist(x, \{x_{1}, x_{2}\}) < \dist(\gamma, \partial A_{0})
$$ 
Which verifies $\ell \subset \subset A_{0}$ and consequently, $\ell \cup \gamma \subset \subset A_{0}$. If necessary, shorten $\ell$ (and then $\gamma$ accordingly) so that $\ell \cap \partial E = \gamma \cap \ell = \{x_{1}, x_{2}\}$. The fact that ``the next crossing'' of $\ell$ with $\partial E$ exists follows from Remark \ref{r:lipschitz}. 

In particular, $\gamma \cup \ell$ is a Jordan curve. Since $\ell \cup \gamma \subset \subset A_{0}$, this ensures there exists a unique connected component $G$ of $A_{0} \setminus (\gamma \cup \ell)$ whose closure does not meet $\partial A_{0}$. See \cref{f:2}. 

At this point there are two cases to consider: when $G \subset E$ and when $G \subset E^{c}$. \footnote{If $\| \cdot \|$ were such that $\| x\| =  \| - x\|$ for all $x \in \R^{2} \setminus \{0\}$ one could just replace $E$ with $E^{c}$ to cover both cases simultaneously. However, this additional assumption on $\| \cdot \|$ is not necessary.} 

First consider the case where $G \subset E$. Define the competitor
$F = E \setminus G$. By choice of $G$, $E \Delta F \subset \subset A_{0}$. So that $F$ is a valid competitor for $E$ in $A_{0}$. 

Moreover, $F \subset E$ ensures $\{ \nu_{E} = - \nu_{F}\} = \emptyset$. Hence, \eqref{e:soggm} implies that $G$ satisfies
\begin{equation} \label{e:1d10}
\mu_{G} = \mu_{E \setminus F} = \mu_{E}\restr F^{(0)} - \mu_{F} \restr E^{(1)}.
\end{equation}

Since $F^{(1)} \subset E^{(1)}$ is disjoint from $E^{(1/2)} \supset \partial^{*}E $ we have $\mu_{E} \restr F^{(0)} = \mu_{E} \restr \left( F^{(0)} \cup F^{(1)} \right) $. Since $\cH^{1} \left( \R^{2} \setminus \left( F^{(0)} \cup F^{(1)} \cup \partial^{*}F \right) \right) = 0$ and $|\mu_{E}|$ is absolutely continuous with respect to $\cH^{1} \restr \partial^{*} E$, this in turn implies
\begin{equation} \label{e:1d11}
\mu_{E} \restr F^{(0)} = \mu_{E} \restr \left( F^{(0)} \cup F^{(1)} \right) = \mu_{E}\restr \left(\partial^{*}E \setminus \partial^{*}F\right).
\end{equation}
Similarly
\begin{equation} \label{e:1d12}
\mu_{F} \restr E^{(1)} = \mu_{F} \restr \left( \partial^{*}F \setminus \partial^{*} E \right).
\end{equation}

Combining \eqref{e:1d10}, \eqref{e:1d11}, and \eqref{e:1d12} yields
\begin{align} \label{e:1d13}
\mu_{G} &= \mu_{E} \restr (\partial^{*}E \setminus \partial^{*}F) - \mu_{F} \restr (\partial^{*}F \setminus \partial^{*}E).
\end{align}

Next, we aim to show geometrically evident fact (see \cref{f:2}) that
\begin{equation} \label{e:1d131}
\mu_{G} = \mu_{E} \restr \gamma - \mu_{F} \restr \ell.
\end{equation}

To this end, first note that Remark \ref{r:lipschitz} ensures $\partial E \approx \partial^{*}E$. But, since $\partial F \subset \left( \ell \cup \partial E \right)$ and $\cH^{1}\left(\ell \cap \partial E\right) = 0$, it follows from the flatness of $\ell$ that $\partial F \approx \partial^{*} F$. Similarly, $\partial^{*}G  \approx \partial G$. By Federer's theorem and \eqref{e:minbound} this also implies, $G^{(0)} \approx \R^{2} \setminus \overline{G}$.

Therefore, since $G = E \Delta F$ implies $\partial E \setminus \overline{G} = \partial F \setminus \overline{G}$, it follows
\begin{equation} \label{e:1d003}
 G^{(0)} \cap \partial^{*}F \approx (\R^{2} \setminus \overline{G} ) \cap \partial F = (\R^{2} \setminus \overline{G}) \cap \partial E \approx G^{(0)} \cap \partial^{*}E .
\end{equation}

Moreover, $F \cap G = \emptyset$ implies $\{ \nu_{F} = \nu_{G}\} = \emptyset$ so that \eqref{e:sord} implies
\begin{equation} \label{e:1d00}
\partial^{*}E = \partial^{*} \left(F \cup G \right) \approx \left( F^{(0)} \cap \partial^{*}G \right) \cup \left( G^{(0)} \cap \partial^{*}F \right) .
\end{equation}
Similarly,
\begin{equation} \label{e:1d002}
\partial^{*}F = \partial^{*}(E \setminus G) \approx \left(E^{(1)} \cap \partial^{*}G\right) \cup \left(G^{(0)} \cap \partial^{*} E\right).
\end{equation}
However, since $\partial^{*}E \cap E^{(1)} = \emptyset$ and $\partial^{*}F \cap F^{(0)} = \emptyset$, \eqref{e:1d003} \eqref{e:1d00} and \eqref{e:1d002} imply
$$
\begin{cases} 
\partial^{*}E \setminus \partial^{*}F \approx \left(F^{(0)} \cap \partial^{*}G \right) \\
\partial^{*}F \setminus \partial^{*}E \approx \left(E^{(1)} \cap \partial^{*}G \right).
\end{cases}
$$
Since $\partial G = \gamma \cup \ell$ with $\gamma \subsetsim F^{(0)}$, $\ell \subsetsim E^{(1)}$ and $\ell \cap \gamma \approx \emptyset$, this verifies \eqref{e:1d131}.

Since $G \subset \subset A_{0}$, it follows $\mu_{G}(A_{0}) = 0$. Indeed, choose $\varphi \in C_{c}^{1}(A_{0})$ such that $\varphi \equiv 1$ on $\overline{G} \supset \spt \mu_{G}$ and observe
\begin{equation} \label{e:0}
\mu_{G}(A_{0}) = \int_{A_{0}} \varphi d \mu_{G} = \int_{A_{0}} \nabla \varphi dx = 0.
\end{equation}
Combining \eqref{e:1d131}, \eqref{e:0},  and the fact that $\nu_{F} \restr \ell$ is constant yields
\begin{equation} \label{e:1d14}
 \int_{\ell} \| \nu_{F}\| d \cH^{1} = \left\| \int_{\ell} \nu_{F} d \cH^{1} \right\| = \left\| \int_{\gamma } \nu_{E} d \cH^{1} \right\|  ,
\end{equation}
where we identify $\| \cdot \|$ with its $1$-homogeneous extension. Since $\| \cdot \|$ is strictly convex and $\gamma$ is not flat (so $\nu_{E} \restr \gamma$ is not constant) we further have 
\begin{equation} \label{e:1d15}
\left\| \int_{\gamma} \nu_{E} d \cH^{1} \right\| < \int_{\gamma} \| \nu_{E}\| d \cH^{1}.
\end{equation}

It now follows from \eqref{e:1d13}, \eqref{e:1d131}, \eqref{e:1d14}, and \eqref{e:1d15} that $\Phi(E;A_{0}) > \Phi(F;A_{0})$. Since $F$ is a valid competitor, this contradicts the $\Phi( \cdot \, ; A_{0})$ minimality of $E$, completing Case 1.

In case $G \subset E^{c}$ define $F = E \cup G$. Since $G = E \Delta F$, 
is compactly contained in $A_{0}$, this case follows analogously to previous one.

It remains to show that if $A_{0} = \R^{2}$ then $E$ is a half-space. Indeed, we know that $\partial E$ must be a collection of non-intersecting lines, and if $\partial E$ contains more than one line, they must be parallel. Let $L_{1}, L_{2}$ be two consecutive lines in $\partial E$. Let $\vec{s}$ be a unit vector parallel to $L_{1}$ and $\vec{t}$ be orthonormal to $\vec{s}$. 

The idea is is to build a competitor $F$ whose boundary is identical to $\partial E$, except on some rectangle, where on this rectangle, the $\vec{s}$-directional sides will be in $\partial E \setminus \partial F$ whereas the $\vec{t}$-directional sides are in $\partial F \setminus \partial E$. By making the $\vec{s}$-directional sides sufficiently long it will follow that $F$ will have less $\Phi$-energy than $E$, contradicting that a $\Phi(\cdot \, ; \R^{2})$-minimizing $E$ can have $\partial E$ containing more than one line.. One difficulty that makes the proof unnecessarily technical, is we need some bounded open set $A_{0}$ so that making this change on the rectangle above ensures that $E \Delta F$ is compactly supported in $A_{0}$. We do this by slightly fattening the rectangle we modify. 

More precisely, rescale and choose your origin so that $L_{i}$ is the line $\{x \in \R^{2}:  x \cdot \vec{t} = (-1)^{i}\}$ for $i \in \{1,2\}$.

For each $\sigma, \tau > 0$ define the rectangle
$$
R_{\sigma, \tau} = \{ x \in \R^{2} : -\sigma \le  x \cdot \vec{s} \le \sigma,  -\tau \le x \cdot \vec{t} \le \tau \}
$$

Define $a,b > 0$ so that $\max \{\| \vec{s} \| , \| - \vec{s} \|\} = a$ and $\min \{ \| \vec{t} \|, \| - \vec{t}\| \} = b$. Choose $\delta > 0$ so that $R_{1, 1 + \delta} \cap \partial E = R_{1, 1+\delta} \cap \left(L_{1} \cup L _{2} \right)$. That is, choose $\delta$ so that ``fattening'' $R$ vertically by a distance of $\delta$ does not meet any new pieces of $\partial E$. Fix $\rho > \frac{b}{a}$ and observe that
$$
\int_{(L_{1} \cup L_{2}) \cap \partial R_{\rho,1}} \| \nu_{E}\| \ge 4 \rho b > 4 a \ge \int_{\partial R_{\rho,1} \setminus \left(L_{1} \cup L_{2}\right)} \|\nu_{R}\|.
$$

Then, defining $F = E \setminus R_{\rho,1}$ or $F = E \cup R_{\rho,1}$ depending on whether or not $R_{\rho,1} \subset E$ it follows that $\Phi(F;R_{\rho+\delta,1+\delta}) < \Phi(E;R_{\rho+\delta; 1+\delta})$ contradicting the minimality of $E$ and hence verifying $\partial E$ is a single line, so that $E$ is a half-space.

\end{proof}

\bibliographystyle{alpha}
\bibdata{references}
\bibliography{references}
\end{document}